\numberwithin{equation}{section}
\newtheorem{theorem}{Theorem}[section]
\newtheorem{lemma}[theorem]{Lemma}
\newtheorem{proposition}[theorem]{Proposition}
\newtheorem{corollary}[theorem]{Corollary}
\newtheorem{remark}[theorem]{Remark}
\newcommand{\bb}[1]{\mathbb{#1}}
\newcommand{\sca}[1]{\left(#1\right)} 
\newcommand{\nor}[1]{\left\Vert #1\right\Vert}
\begin{document}

\title[Idempotents of large norm in Fourier algebras]
{Idempotents of large norm and homomorphisms of Fourier algebras}

\author{M. Anoussis,  G. K. Eleftherakis, A. Katavolos}

\address{M. Anoussis\\Department of Mathematics\\ University of the Aegean\\
832 00 Karlovassi, Greece }\email{mano@aegean.gr}

\address{G. K. Eleftherakis\\Department of Mathematics\\ Faculty of Sciences\\
University of Patras\\265 00 Patra, Greece }
\email{gelefth@math.upatras.gr}

\address{A. Katavolos\\Department of Mathematics\\
National and Kapodistrian University of Athens\\
1578 84 Athens, Greece }\email{akatavol@math.uoa.gr}

\thanks{2010 {\it Mathematics Subject Classification.}  primary: 43A22, secondary: 43A30} 
\keywords{Group homomorphism, idempotents, Fourier algebras}

\maketitle

\begin{abstract}

 We  provide necessary and sufficient conditions 
for the existence of idempotents of arbitrarily large norms in the Fourier algebra 
 $A(G)$ and the Fourier-Stieltjes algebra $B(G)$ of a locally compact group $G$.
We prove that  the existence of idempotents of arbitrarily large norm in $B(G)$
implies the existence of 
homomorphisms  of arbitrarily large norm from $A(H)$ into $B(G)$ for every locally compact group $H$.
A partial converse is also obtained:  the  existence of homomorphisms  of arbitrarily large norm 
from $A(H)$ into $B(G)$ for some amenable locally compact group $H$ implies 
the existence of idempotents of arbitrarily large norm in $B(G)$.
\end{abstract}

\section{introduction}
Let $G$  be a locally compact group. 
The Fourier algebra $A(G)$   and the Fourier-Stieltjes algebra $B(G)$ of
the group $G$ were introduced by Eymard \cite{eym}. 
The Fourier-Stieltjes algebra of $G$ consists of 
the matrix coefficients $\sca{\pi(\cdot)\xi,\eta}$ of all continuous
unitary representations $\pi$ of $G$, while the Fourier algebra of $G$ consists of the matrix 
coefficients of the left regular  representation of $G$. If $G$ is abelian, $A(G)$ and $B(G)$ 
can be identified, via the Fourier transform, with $L^1(\widehat{G})$ and the measure algebra
$M(\widehat{G})$ of the dual group $\widehat{G}$, respectively.
In \cite{co1} Cohen characterized the homomorphisms from $A(H)$ into $B(G)$ in terms of 
piecewise affine maps  when $H, G$ are locally compact abelian groups. 
To obtain his result he proved a characterization of idempotents in $B(G)$ \cite{co2}.
Host in \cite{ho} extended  the characterization  of idempotents in $B(G)$ 
to general locally compact groups.

Homomorphisms of Fourier algebras for locally compact groups were studied by Ilie \cite{I}  
and Ilie and Spronk  \cite{IS}.
They characterized completely bounded homomorphisms   from $A(H)$ into $B(G)$ 
for locally compact groups 
$H, G$ with $H$ amenable in terms of piecewise affine maps \cite{IS} (see also \cite{daws}).

Let $G, H$ be locally compact  groups. Let   $K$ be  a subgroup of $G$  and $C$ 
a left coset of $K$  in $G$. A map $\alpha: C \rightarrow H$ is
called affine  if there exists  a continuous homomorphism  $\theta : K\rightarrow H$ 
and elements $s_0\in H, t_0\in G$ such that $C=t_0^{-1}K$ and 
$$\alpha(t)=s_0\theta (t_0t), $$ 
for all $t \in C$.

A map $\alpha: Y\rightarrow H$ is called {\it  piecewise affine} 
if $Y$ can be written as a disjoint union $Y=\bigcup\limits_{i=1}^mY_i$, where each $Y_i$ 
belongs to  the open coset ring $\Omega_0(G)$,   such that each restriction $\alpha|_{Y_i}$ 
extends to an affine  map 
$\alpha_i: C_i\rightarrow H$  defined on an open coset $C_i\supseteq Y_i$. 
 
Recall that the open coset ring $\Omega_0(G)$
is the ring generated by the open cosets of the group $G$.

Let  $Y$ be an open and closed  subset of $G$ and  $\alpha$ 
a piecewise affine map  $Y \rightarrow H$. Define $\rho: A(H)\rightarrow B(G)$ by 
\begin{equation} \label{star2} 
\rho (u)(t)=\left\{\begin{array}{ll} u\!\circ\!\alpha (t), & t\in Y\\  
0, & t\in G\setminus Y \end{array}\right.
\end{equation}
 Ilie and Spronk proved in 
\cite[Proposition 3.1 and Theorem 3.7]{ IS} that $\rho$
is a completely bounded homomorphism and that,  if the group $H$ is amenable, 
every completely bounded homomorphism $\rho: A(H)\rightarrow B(G)$ is of this form.

\smallskip
{\it Notation}
The symbol $\chi_F$ denotes the characteristic function of a set $F$.
\smallskip

 To motivate our work, consider the following simple example 
of completely bounded homomorphisms 
from $A(\mathbb Z)$  to $A(\mathbb Z)$    of arbitrarily large norm:

For  $F=\{-k, \dots, k\}\subseteq \bb Z$  we define the map
$\rho_F: A(\bb Z)\to A(\bb Z)$ by 
$$\rho_F (u)(j)= \begin{cases} u(0) & \text{if } j\in F\\
								0  & \text{if } j\notin F
\end{cases}$$
Then it follows from  \cite{I} that $\rho_F$
is  a completely bounded homomorphism.
Consider the function $u_0 :\bb Z \to \bb Z$ given by $u_0(i)=\delta_{i,0}$.
Clearly $u_0\in A(\bb Z)$ and   $\|u_0\|_{A(\bb Z)}\leq 1$.

Since $ \rho_F (u_0)=\chi_F$, its  Fourier transform is 
$ \widehat{\rho_F (u_0)}(z)= \widehat{\chi_F} (z)=\sum_{i\in F}z^{-i} $ 
and so
$$
\|\rho_F\| \geq \|\rho_F(u_0)\|_{A(\bb Z)}= \|\chi_F\|_{A(\bb Z)}=\|\widehat{\chi_F}
 \|_{L^1(\bb T)}$$ $$= \int_{\bb T} \left| \sum_{i\in F}z^{-i}\right|dz=\int_0^{2\pi} |D_k(x)|\frac{dx}{2\pi}
$$
where $D_k$ is the Dirichlet kernel, 
and it is known that the $L^1$ norm of $D_k$ grows like $\log k$.

In the above example  we used the existence of idempotents in $A(\bb Z)$ of large norm 
to construct homomorphisms  of $A(\bb Z)$  with large norm.

In this work we study the following questions: a) for which locally compact groups $G$ there exist  
idempotents of arbitrarily large norms in the Fourier algebra 
 $A(G)$ (resp. in the Fourier-Stieltjes algebra $B(G)$) and b) how is the existence of 
 idempotents of arbitrarily large norm related to the existence of 
homomorphisms  of arbitrarily large norm between  Fourier algebras.
 
We  provide necessary and sufficient conditions 
for the existence of idempotents of arbitrarily large norms in the Fourier algebra 
$A(G)$ (resp. in the Fourier-Stieltjes algebra $B(G)$) of a locally compact group $G$.
To prove our results we reduce the problem to the case where $G$ is totally disconnected. Then 
we first consider the case where $G$ is a discrete group in Proposition \ref{m222}, 
and for the general case we use a result of  Leiderman,  Morris and  Tkachenko 
for  totally disconnected groups  \cite{lm}. 
We also prove that  the existence of idempotents of arbitrarily large norm in $B(G)$
implies the existence of homomorphisms  of arbitrarily large norm from $A(H)$ 
into $B(G)$ for every locally compact group $H$.
Finally, we obtain a partial converse:  the  existence of 
homomorphisms  of arbitrarily large norm from $A(H)$ into $B(G)$ 
for some amenable locally compact group $H$ implies 
the existence of idempotents of arbitrarily large norm in $B(G)$.

\section{norms of idempotents}

Let $G$ be a locally compact group. 
A function  $u: G\rightarrow\bb C$ 
is called a multiplier of $A(G)$ if $uA(G)\subseteq A(G).$ In this case the map 
$m_u: A(G)\to A(G): v\mapsto uv$ is bounded. In case it is completely bounded we call $u$ 
a completely bounded multiplier. We denote by 
$M_{cb}A(G)$ the algebra of completely bounded multipliers of $A(G)$. 

The space
$M_{cb}A(G)$ inherits the operator space structure from the space 
$CB(A(G))$  of completely bounded maps 
$A(G)\rightarrow A(G)$.  We write $ \|u\|_{CB(A(G))}$, and simply   $\|u\|_{cb}$ 
when there is no risk of confusion,  for the completely bounded norm $ \|m_u\|_{CB(A(G))}$
of an element $u\in M_{cb}A(G)$. Note that $B(G)$ consists of completely bounded multipliers
on $A(G)$ \cite[Corollary 1.8]{decahaa}; 
thus $B(G)$ (and also $A(G)$) inherits the operator space structure from $M_{cb}A(G)$. 

It is shown in \cite{decahaa} that the space  $M_{cb}A(G)$ is the dual of the normed space 
$(L^1(G), \|\cdot\|_{Q(G)})$ where the norm $\|\cdot\|_{Q(G)}$ is given by 
$$\|f\|_{Q(G)}=\sup\left\{\left|\int_Gf(s)\phi(s)ds\right|: \phi\in M_{cb}A(G),\|\phi\|_{cb}\leq 1 \right\},
\quad f\in L^1(G)\, .$$

We shall use the following Theorem, combining \cite[Corollary 6.3 (iv)]{spronk} and
 \cite[2.26, Corollaire 3 and 3.25, Proposition]{eym}:   
\begin{theorem}[Eymard, Spronk]
\label{quo}
 Let $G$ be a locally compact group and $H$ a closed, normal subgroup of $G$. 
 Let  $\pi: G\rightarrow G/H$ be the  quotient map. The map 
$$j_{\pi}: M_{cb}(A(G/H))\rightarrow M_{cb}(A(G)): u\mapsto u\circ\pi$$ 
 is  a complete  isometry. 
 Moreover,  $j_{\pi}(B(G/H)) \subseteq B(G)$; if $H$ is compact, then $j_{\pi}(A(G/H)) \subseteq A(G)$ .
\end{theorem}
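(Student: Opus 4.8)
The statement really has two logically independent parts, and I would prove them separately: the complete-isometry assertion (Spronk) and the two containments $j_\pi(B(G/H))\subseteq B(G)$ and, for compact $H$, $j_\pi(A(G/H))\subseteq A(G)$ (Eymard). I would first dispose of the easy structural facts. The map $j_\pi$ is clearly a unital algebra homomorphism, since $(uv)\circ\pi=(u\circ\pi)(v\circ\pi)$. Complete contractivity of $j_\pi$ is also elementary via the Herz--Schur description of $M_{cb}A$: if $\nor{u}_{cb}\le 1$ there are bounded continuous maps $\xi,\eta\colon G/H\to\cl{K}$ into a Hilbert space with $\sup\nor{\xi}\,\sup\nor{\eta}\le 1$ and $u(\dot s\,\dot t^{-1})=\dua{\eta(\dot t),\xi(\dot s)}$; composing $\xi,\eta$ with $\pi$ yields a representation of $u\circ\pi$ over $G$ with the same bounds, and the identical computation at every matrix level gives $\nor{u\circ\pi}_{cb}\le\nor{u}_{cb}$. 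Thus $j_\pi$ is a completely contractive homomorphism, and it remains to prove the reverse inequality.

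For the isometry I would pass to the operator-space preduals and realize $j_\pi$ as an adjoint. By Weil's formula the averaging map $Pf(\dot s)=\int_H f(sh)\,dh$ is a surjective contraction $L^1(G)\to L^1(G/H)$, and since $u\circ\pi$ is constant on cosets one has $\int_G f(s)(u\circ\pi)(s)\,ds=\int_{G/H}Pf(\dot s)\,u(\dot s)\,d\dot s$ for all $f\in L^1(G)$ and $u\in M_{cb}A(G/H)$. Under the dualities $M_{cb}A(G)=(L^1(G),\nor{\cdot}_{Q(G)})^*$ and $M_{cb}A(G/H)=(L^1(G/H),\nor{\cdot}_{Q(G/H)})^*$ this identifies $j_\pi$ with $P^*$. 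Consequently the complete isometry of $j_\pi$ is \emph{equivalent} to $P$ being a complete metric surjection of the $Q$-preduals, i.e. to the functoriality of the $Q(G)$-construction under the quotient map. This is exactly where I expect the real difficulty to lie. When $H$ is compact one can argue directly: $\phi\mapsto\int_H R_h\phi\,dh$ is an integral average of right translations, each a complete isometry of $M_{cb}A(G)$, hence a completely contractive idempotent onto the coset-constant multipliers, which forces $\nor{u}_{cb}=\nor{u\circ\pi}_{cb}$. For general closed normal $H$ this naive averaging is unavailable, and one must instead establish the complete metric surjectivity of $P$ at the predual level; this is the substantive ingredient supplied by Spronk's Corollary 6.3(iv), and I would invoke it here.

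Finally, the two containments are representation-theoretic and comparatively soft. If $u\in B(G/H)$ is a coefficient $u(\dot s)=\dua{\sigma(\dot s)\zeta,\omega}$ of a continuous unitary representation $\sigma$ of $G/H$, then $\sigma\circ\pi$ is a continuous unitary representation of $G$ and $(u\circ\pi)(s)=\dua{(\sigma\circ\pi)(s)\zeta,\omega}$, so $u\circ\pi\in B(G)$; this is Eymard 2.26. For the Fourier-algebra containment with $H$ compact, I would use that $A(G/H)\circ\pi$ coincides with the $H$-invariant part of $A(G)$: normalizing Haar measure on the compact $H$, a coefficient of the regular representation of $G/H$ lifts, after averaging coefficients of $\lambda_G$ over $H$, to a genuine $L^2$-coefficient of $\lambda_G$, hence to an element of $A(G)$. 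Compactness of $H$ is essential precisely at this averaging step, since otherwise the averaged coefficient need not remain square-integrable; this is the content of Eymard 3.25. Combining the three parts gives the Theorem.
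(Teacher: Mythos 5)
Your proposal is correct in outline and takes essentially the same route as the paper: the paper gives no proof of this statement at all, presenting it as a quoted result assembled from Spronk's Corollary~6.3(iv) (the complete isometry of $j_\pi$) and Eymard's 2.26 Corollaire~3 and 3.25 Proposition (the containments $j_\pi(B(G/H))\subseteq B(G)$ and, for compact $H$, $j_\pi(A(G/H))\subseteq A(G)$), which are exactly the sources you ultimately invoke for the substantive steps. Your added elaboration of the soft parts (complete contractivity via the Gilbert/Herz--Schur description, the identification of $j_\pi$ with the adjoint of the Weil averaging map, and the representation-theoretic lifts) is sound and correctly locates the one genuinely hard ingredient in Spronk's result.
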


\begin{proposition}
\label{m2} Let  $G $ be  a discrete  infinite group. Then   
 $$\sup\{\|\chi_F\|_{cb}: F\subseteq G  \text{ finite}\}=+\infty.$$  \label{m222}
\end{proposition}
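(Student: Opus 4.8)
The plan is to reduce the statement to an estimate for finite Schur multipliers and then to exhibit, inside the infinite group, configurations that realize the triangular truncation matrix, whose Schur multiplier norm is known to grow logarithmically. First I would use the well-known identification (Bo\.zejko--Fendler) of the completely bounded multipliers of $A(G)$ with Herz--Schur, i.e.\ Schur, multipliers: for $\phi\in M_{cb}A(G)$ one has $\|\phi\|_{cb}=\big\|[\phi(g^{-1}h)]_{g,h\in G}\big\|_{S}$, the Schur multiplier norm of the matrix $[\phi(g^{-1}h)]$ acting on $B(\ell^2(G))$. Applied to $\phi=\chi_F$, the $(g,h)$ entry equals $1$ exactly when $g^{-1}h\in F$. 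Since compressing a Schur multiplier to a sub-block indexed by finite sets $I,J\subseteq G$ cannot increase its norm, for any elements $g_1,\dots,g_n,h_1,\dots,h_n$ I obtain the lower bound $\|\chi_F\|_{cb}\ge\big\|[\chi_F(g_i^{-1}h_j)]_{i,j=1}^{n}\big\|_{S}$.

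The central step is a greedy combinatorial construction. Since $G$ is infinite, for each $n$ I can choose distinct elements $h_1,\dots,h_n$ and then choose $g_1,\dots,g_n$ one at a time so that all $n^2$ products $g_i^{-1}h_j$ are pairwise distinct: when $g_i$ is selected it need only avoid the finite set $\{g_k h_l^{-1}h_j: k<i,\ 1\le l,j\le n\}$, which is possible because $G$ is infinite. Setting $F=\{g_i^{-1}h_j: i\le j\}$, a finite subset of $G$, the distinctness of the products guarantees that $g_i^{-1}h_j\in F$ if and only if $i\le j$; hence the sub-block $[\chi_F(g_i^{-1}h_j)]_{i,j}$ is precisely the $n\times n$ upper-triangular all-ones matrix $T_n$.

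It then remains to invoke the classical fact that the Schur multiplier norm of the triangular truncation satisfies $\|T_n\|_{S}\ge c\log n$ for an absolute constant $c>0$ (the matrix analogue of the growth of the $L^1$-norm of the Dirichlet kernel used in the Introduction). Combining the three steps gives $\|\chi_F\|_{cb}\ge\|T_n\|_{S}\ge c\log n$, and letting $n\to\infty$ shows that the supremum is $+\infty$.

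I expect the main obstacle to be the second step, namely producing in an \emph{arbitrary} infinite group a configuration whose difference pattern $g_i^{-1}h_j$ is triangular; this must cover torsion groups such as $\bigoplus\bb{Z}/2\bb{Z}$ or Tarski monsters, in which there is neither an element of infinite order nor an infinite abelian subgroup to fall back on. The essential point is to use \emph{rectangular} configurations, with genuinely distinct families $\{g_i\}$ and $\{h_j\}$, rather than a single family: for a single family the matrix $[z_i^{-1}z_j\in F]$ can be forced to be symmetric (for instance when every element is an involution), which precludes the triangular pattern. The greedy distinctness argument avoids all group-theoretic structure and applies uniformly, which is what makes a single proof for every infinite discrete $G$ possible.
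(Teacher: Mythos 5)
Your argument is correct, but it is genuinely different from the one in the paper. The paper argues by contradiction: if $\sup_F\|\chi_F\|_{cb}=M<\infty$, then a Krein--Milman/extreme-point argument combined with the duality $M_{cb}A(G)=(\ell^1(G),\|\cdot\|_{Q(G)})^*$ shows that $\ell^\infty(G)\subseteq M_{cb}A(G)$ with $\|u\|_{cb}\le 4M\|u\|_\infty$, i.e.\ $G$ is a strong Leinert set, and Pisier's theorem then forces $G$ to be finite. Your route instead passes through the Bo\.zejko--Fendler identification of $\|\chi_F\|_{cb}$ with the Schur multiplier norm of $[\chi_F(g^{-1}h)]$, compresses to a finite block, and uses a greedy choice of $g_1,\dots,g_n,h_1,\dots,h_n$ making all $n^2$ products $g_i^{-1}h_j$ distinct, so that $F=\{g_i^{-1}h_j:i\le j\}$ reproduces the triangular truncation matrix, whose Schur norm grows like $\log n$. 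All steps are sound; the only blemish is the avoidance set in the greedy step, which should be $\{h_jh_l^{-1}g_k : k<i,\ 1\le j,l\le n\}$ (solving $g_i^{-1}h_j=g_k^{-1}h_l$ for $g_i$) rather than $\{g_kh_l^{-1}h_j\}$ --- immaterial, since either way it is a finite set that an infinite group allows you to avoid. What your approach buys is significant: it is constructive and quantitative, producing explicit finite sets $F$ with $\|\chi_F\|_{cb}\ge c\log n$, whereas the paper's proof is non-constructive (a point the authors themselves acknowledge, supplying a constructive alternative only for infinite products of finite groups in Proposition \ref{nonab}); on the other hand, the paper's argument is self-contained relative to its stated duality framework and avoids invoking the Herz--Schur correspondence and the classical triangular-truncation estimate, at the cost of relying on Pisier's deep theorem on Leinert sets.
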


\begin{proof} 
Assuming that 
$$\sup\{\|\chi_F\|_{cb}: F\subseteq G  \text{ finite}\}=M<+\infty\, ,$$
we shall prove that $\ell^{\infty}(G)\subseteq  M_{cb}(A(G))$. This means that $G$ 
is a strong Leinert set, which, by a result of  Pisier  \cite[Theorem 3.3]{pisier}, 
implies that $G$ must be finite.
 
If $v$ is an extreme point of the positive part $\Omega$ of the unit ball of $\ell^\infty (G)$, 
then $v(t)\in \{0, 1\}$ for all $t\in G.$ Thus for any finite  $F\subseteq G $, the function $\chi_Fv$ 
takes values in $\{0,1\}$ and so 
$\chi_Fv=\chi_{F'}$ for some finite subset $F'$ of $G$. Thus  
$$\|\chi_F v\|_{cb}\leq M$$ by our assumption.

Now fix an arbitrary $u\in \Omega$ and  a finite subset $ F\subseteq G$.  
By the Krein-Milman theorem, $u$ is a weak-* limit  of a net $(u_i)$ of convex combinations  of 
extreme points of $\Omega$. By the previous paragraph, each $u_i$ will satisfy 
$\|\chi_F u_i\|_{cb}\le M$.

Since $M_{cb}A(G)$ is the dual of  $(\ell^1(G), \|\cdot\|_{Q(G)})$,  given 
$\varepsilon >0$ there exists $f\in \ell^1(G)$ with  $\|f\|_{Q(G)}\leq 1$ such that
$$\|\chi_F u\|_{cb}-\varepsilon < \left|\sum_{t\in G} (\chi_F uf)(t)\right|\, .$$ Now
$$\sum_{t\in G} (\chi_F uf)(t)=\lim_i \sum_t (\chi_F u_if)(t),$$
since $u$ is the weak-* limit  of the  net $(u_i)$
and so
$$\sum_{t\in G} (\chi_F uf)(t)=\lim_i \sum_t (\chi_F u_if)(t)=\lim_i \langle f, \chi_F u_i\rangle,$$
where $\langle\cdot ,\cdot \rangle$ denotes the duality 
between $\ell^1(G)$ and $M_{cb}A(G)$.  But 
$$|\langle f, \chi_F u_i\rangle|\le\|f\|_{Q(G)} \|\chi_F u_i\|_{cb}\leq M$$ for each $i$, 
and therefore 
\begin{align*}
\|\chi_F u\|_{cb}-\varepsilon < \left|\sum_{t\in G} (\chi_F uf)(t)\right| &=
\lim_i |\langle f, \chi_F u_i\rangle| \leq M.
\end{align*}
Thus, for all nonnegative $u$ in the unit ball of $\ell^\infty (G)$ we have 
$$ \sup_{u\in \Omega} \|\chi_F u\|_{cb}\leq M$$
for every finite subset $F\subseteq G$. In particular, if $u\in c_{oo}(G)$ is nonnegative then 
$\frac{u}{\|u\|_\infty}\in \Omega$  and thus
 $$ \| u\|_{cb}\leq M\|u\|_\infty.$$ 
  Therefore  for all $u\in c_{oo}(G)$, we have that 
\begin{align*}
\|u\|_{cb} & \leq \|(\mathop{\rm Re}u)^+\|_{cb}+\|(\mathop{\rm Re}u)^-\|_{cb}
+\|(\mathop{\rm Im}u)^+\|_{cb}+\|(\mathop{\rm Im}u)^-\|_{cb}\\ 
& \leq  M(\|(\mathop{\rm Re}u)^+\|_{\infty}+\|(\mathop{\rm Re}u)^-\|_{\infty}
+\|(\mathop{\rm Im}u)^+\|_{\infty}+\|(\mathop{\rm Im}u)^-\|_{\infty}) \\
&\le 4M \|u\|_{\infty}.
\end{align*} 
 We conclude that the norms $\|\cdot \|_{cb}$ and $\|\cdot \|_{\infty }$ are equivalent on 
$c_{00}(G).$ Thus the identity map 
${\rm id}: (c_{00}(G),\|\cdot \|_{\infty})\rightarrow (M_{cb}(A(G)), \|\cdot \|_{cb}) $ is continuous. 
Since $M_{cb}(A(G))$ is a dual Banach space we can consider the  unique 
weak-* continuous extension of ${\rm id}$ to its double dual $\ell^\infty(G)$ 
(see e.g. \cite[Lemma A.2.2.]{bm})
which we denote by $T:$  
$$
T:(\ell^\infty(G),\|\cdot \|_{\infty})=(c_{00}(G),\|\cdot \|_{\infty})^{**}\to (M_{cb}(A(G)), \|\cdot \|_{cb}). $$ 
We claim that $T$ is the identity. If  $u\in\ell^\infty(G)$; we will show that $u=Tu$. 
Indeed, if $(u_i)$ is a net in $c_{00}(G)$
such that $u=\lim_i u_i$ in the weak* topology $\sigma(\ell^\infty(G),\ell^1(G)),$ 
then $(Tu_i)$ converges to $Tu$ is the weak-* topology of $M_{cb}(A(G))$, and hence pointwise 
(since $(Tu)(t)=\langle Tu, \delta_t\rangle$ for $t\in G$).  Thus, for all $t\in G$, 
 $$(Tu)(t)=\lim_i (Tu_i)(t )=\lim_i u_i(t)=u(t)$$
since $u=\lim_i u_i$ in the weak* topology $\sigma(\ell^\infty(G),\ell^1(G))$ and hence pointwise. 
This   proves our claim. 

 We have shown that $\ell^{\infty}(G)\subseteq  M_{cb}(A(G))$ and thus $G$ must be
 finite, as observed above.
\end{proof}

Since $\nor{u}_{cb}\le \nor{u}_{B(G)}$ when $u\in B(G)$ \cite[Corollary 1.8]{decahaa}, 
we obtain the following
\begin{corollary}\label{76}
 If $G $ is a discrete  infinite group then   
 $$\sup_F\{\|\chi_F\|_{A(G )}: F\subseteq G  \text{ finite}\}=+\infty.$$ 
\end{corollary}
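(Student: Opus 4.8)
The plan is to deduce the corollary directly from Proposition \ref{m222}, transporting the unboundedness of the completely bounded norms across the two natural norm comparisons recalled just before the statement. First I would check that every quantity in sight is well defined: for a finite subset $F$ of the \emph{discrete} group $G$ the characteristic function $\chi_F$ is finitely supported, and on a discrete group $c_{00}(G)\subseteq A(G)$, so $\chi_F\in A(G)\subseteq B(G)\subseteq M_{cb}A(G)$ and each of $\|\chi_F\|_{cb}$, $\|\chi_F\|_{B(G)}$, $\|\chi_F\|_{A(G)}$ makes sense.

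Next I would assemble the two elementary inclusions. The inclusion $A(G)\hookrightarrow B(G)$ is contractive (Eymard), so $\|\chi_F\|_{B(G)}\le\|\chi_F\|_{A(G)}$; combining this with the estimate $\|u\|_{cb}\le\|u\|_{B(G)}$ of \cite[Corollary 1.8]{decahaa} recalled in the preceding remark gives, for every finite $F\subseteq G$,
$$\|\chi_F\|_{cb}\le\|\chi_F\|_{B(G)}\le\|\chi_F\|_{A(G)}.$$
Taking the supremum over all finite $F$ and invoking Proposition \ref{m222}, which asserts that $\sup_F\|\chi_F\|_{cb}=+\infty$, forces $\sup_F\|\chi_F\|_{A(G)}=+\infty$, which is exactly the claim.

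There is essentially no obstacle here: the entire substance of the corollary already resides in Proposition \ref{m222}, and what remains is merely to push that unboundedness forward along the contractive maps $A(G)\to B(G)\to M_{cb}A(G)$. The only items deserving (routine) attention are the observation that finitely supported functions lie in $A(G)$ for a discrete group, and the fact that the embedding $A(G)\hookrightarrow B(G)$ does not increase norms.
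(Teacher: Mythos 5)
Your argument is correct and is essentially the paper's own proof: the corollary is deduced immediately from Proposition \ref{m222} via the chain $\|\chi_F\|_{cb}\le\|\chi_F\|_{B(G)}\le\|\chi_F\|_{A(G)}$ (the paper states only the first inequality, since the embedding $A(G)\hookrightarrow B(G)$ is in fact isometric). The extra checks you include (finitely supported functions lie in $A(G)$ for discrete $G$, contractivity of the inclusion) are routine and consistent with what the paper leaves implicit.
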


\noindent \emph{Note } We thank the referee for providing 
the following alternative argument for Corollary \ref{76}:

Using the analogous arguments as in the proof of Proposition \ref{m2}, we can show that 
if $\sup_F\{\|\chi_F\|_{A(G )}: F\subseteq G  \text{ finite}\}<+\infty$ then
$\ell^\infty(G) =B(G)$. 

It follows from this equality that $G$ must be finite. 
Indeed if $\ell^\infty(G) =B(G)$, then $\ell^1(G) =C^*(G)$ with equivalent
norms. However, this would imply that $\ell^1(G) $ is Arens regular, which by \cite{young} shows that $G$ is finite.

\begin{theorem}\label{222} Let $G$ be an infinite totally disconnected group. Then 
$$\sup \{\|\chi_F\|_{cb}: F\subseteq G,  \chi_F \in A(G)\}=+\infty.$$ 
\end{theorem}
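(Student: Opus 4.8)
The plan is to split according to the coarse structure of $G$ and to reduce the general totally disconnected case to two extremes: the discrete case, where Proposition \ref{m222} applies directly, and the compact (profinite) case, which carries the real content. By van Dantzig's theorem $G$ has a compact open subgroup $U$. If $U$ is finite then $\{e\}$ is open and $G$ is discrete, so the conclusion is exactly Proposition \ref{m222}. Otherwise $U$ is an infinite compact totally disconnected group, i.e.\ an infinite profinite group. A compact open $F\subseteq U$ is compact open in $G$ with $\chi_F|_U=\chi_F$, and restriction to an open subgroup is a complete contraction $M_{cb}A(G)\to M_{cb}A(U)$ (Herz's restriction theorem), so $\|\chi_F\|_{M_{cb}A(U)}\le\|\chi_F\|_{cb}$; hence an idempotent of large cb-norm in $A(U)$ forces one of at least as large cb-norm in $A(G)$. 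The reduction of an arbitrary (possibly non-compact, non-discrete) totally disconnected $G$ to its profinite open subgroups rests on van Dantzig's theorem and on the structural result of Leiderman, Morris and Tkachenko \cite{lm}. Thus it suffices to treat the case where $G$ is itself an infinite profinite group.

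For the profinite case the idea is to adapt the argument of Proposition \ref{m222}, replacing finite sets by compact open sets and $\ell^\infty$ by $C(G)$, and to push it through the finite quotients of $G$. Assume for contradiction that $\sup\{\|\chi_F\|_{cb}:F\subseteq G\text{ compact open}\}=M<+\infty$. Let $V$ range over the open normal subgroups of $G$, so each $G/V$ is finite and $G=\varprojlim_V G/V$. By Theorem \ref{quo} the map $j_{\pi_V}$ is a complete isometry and, $V$ being compact, carries $A(G/V)$ into $A(G)$; since $\pi_V^{-1}(F')$ is compact open for every $F'\subseteq G/V$, the assumption gives $\|\chi_{F'}\|_{M_{cb}A(G/V)}=\|\chi_{\pi_V^{-1}(F')}\|_{cb}\le M$. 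Running the quantitative part of Proposition \ref{m222} inside each finite group $G/V$ (which is elementary there: a nonnegative $u$ with $\|u\|_\infty\le1$ is an average of indicators by the layer-cake formula, and the general case follows by splitting into four nonnegative parts) yields the \emph{uniform} bound $\|u\|_{M_{cb}A(G/V)}\le 4M\|u\|_\infty$ for all $u\in\ell^\infty(G/V)$ and all $V$.

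The functions that factor through some $G/V$ are precisely the locally constant functions on $G$, which are $\|\cdot\|_\infty$-dense in $C(G)$ by Stone--Weierstrass. For such a $u$ one has $\|u\|_{cb}=\|u\|_{M_{cb}A(G/V)}\le 4M\|u\|_\infty$ by Theorem \ref{quo}, so the inclusion of the locally constant functions into $M_{cb}A(G)$ is bounded for the sup-norm; since $\|u\|_\infty\le\|u\|_{cb}$ for every $u\in M_{cb}A(G)$, norm convergence in $M_{cb}A(G)$ forces uniform, hence pointwise, convergence, and so this inclusion extends to a map $C(G)\to M_{cb}A(G)$ that is the identity on functions, exactly as in the final step of Proposition \ref{m222} where $T=\id$. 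Thus $C(G)\subseteq M_{cb}A(G)$; as the reverse inclusion $M_{cb}A(G)\subseteq C(G)$ holds for any compact $G$ and the two norms are then equivalent, we get $M_{cb}A(G)=C(G)$. Because $G$ is compact, hence amenable, $M_{cb}A(G)=B(G)=A(G)$, whence $A(G)=C(G)$; but the Fourier algebra of an infinite compact group is a proper subset of $C(G)$, a contradiction. This settles the profinite case, and with it the theorem.

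I expect the main obstacle to be exactly the compact (profinite) case. For a discrete group the extreme-point and Krein--Milman machinery of Proposition \ref{m222} lives in the dual $\ell^\infty(G)=c_{00}(G)^{**}$ and the $\{0,1\}$-valued extreme points are genuine finite idempotents, whereas for a compact $G$ the natural ambient space $C(G)$ is not a dual space and its extreme points need not be compact open. The device that circumvents this is to descend to the finite quotients $G/V$, where the estimate is finite-dimensional and the indicators are honestly extreme, to extract a uniform bound there, and only then to reassemble via density of locally constant functions and Theorem \ref{quo}. The secondary delicate point is the passage from a general totally disconnected $G$ to its profinite open subgroups, for which one invokes \cite{lm} together with the behaviour of the completely bounded multiplier norm under restriction to open subgroups.
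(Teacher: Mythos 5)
Your argument is correct in substance but takes a genuinely different route from the paper's in the hard (compact) case. The paper, after the same van Dantzig dichotomy, invokes \cite{lm} to replace the infinite compact open subgroup $H$ by a separable quotient $K=H/N$ (a countable product of finite groups), produces a finite set $F$ of large norm in $A(K_d)$ via Proposition \ref{m222}, and then recaptures $\chi_F$ as a weak* limit in $B(K_d)$ of the idempotents $\chi_{FK_n}\in A(K)$ along a decreasing \emph{sequence} of compact open subgroups with $\bigcap_n K_n=\{e\}$ --- this is exactly where separability, hence \cite{lm}, is needed. You instead run a contradiction argument entirely inside the profinite group: a uniform bound $M$ on idempotents passes, via the complete isometry of Theorem \ref{quo}, to the uniform bound $\|u\|_{M_{cb}A(G/V)}\le 4M\|u\|_\infty$ on every finite quotient (layer-cake decomposition plus splitting into four nonnegative parts), hence to all locally constant functions on $G$, hence by Stone--Weierstrass density and completeness of $M_{cb}A(G)$ (using that the cb-norm dominates the sup-norm) to all of $C(G)$; since $M_{cb}A(G)=B(G)=A(G)$ for compact $G$, this forces $A(G)=C(G)$. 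Your route is arguably cleaner: it avoids \cite{lm} and the auxiliary discrete group $K_d$ altogether. (Note that your own text asserts that the reduction to profinite open subgroups rests on \cite{lm}; it does not --- van Dantzig plus the restriction theorem suffice, and nothing in your profinite argument uses separability.) What the paper's proof buys instead is that it terminates in the already-established discrete case, Proposition \ref{m222}, rather than in an external fact about compact groups.

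Two points need tightening. First, the final contradiction --- that $A(G)\subsetneq C(G)$ for every infinite compact group $G$ --- is asserted without justification, yet it carries the entire weight of the argument. It is true and classical: for instance, $A(G)$ is the predual of the von Neumann algebra $VN(G)$ and hence weakly sequentially complete, while $C(G)$ for infinite compact $G$ contains a copy of $c_0$; alternatively, $C(G)=B(G)$ forces $C^*(G)=L^1(G)$ with equivalent norms, hence Arens regularity of $L^1(G)$, hence $G$ finite by \cite{young}, exactly parallel to the referee's note after Corollary \ref{76}. Some such argument or citation must be supplied. Second, in the reduction from $G$ to the compact open subgroup $U$ you should also note that a compact open $F\subseteq U$ satisfies $\chi_F\in A(G)$ and not merely $\chi_F\in A(U)$ (it is a finite union of cosets of compact open subgroups of $G$, or one can argue via \cite[3.21]{eym} as the paper does); the contractivity of restriction by itself does not address membership in $A(G)$.
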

\begin{proof}  
From the theorem of van Dantzig \cite{dantz}, \cite[ Theorem II.7.7]{hr} there exists a 
compact open subgroup $H\subseteq G.$ 

If $H$ is finite then $\{e\}$ 
is an open subgroup of $G$ and thus $G$ is discrete. 
In this case the conclusion follows from Proposition \ref{m2}.
 
If $H$ is infinite, by Theorem 2.6 in \cite{lm} there exists a closed normal subgroup $N$ of $H$ 
such that the  quotient $H/N$  is homeomorphic to  a countably infinite product of finite  groups. 
We write $K=H/N.$ Clearly $K$ is compact and separable. We also denote by $K_d$ the group $K$ 
with the discrete topology. The inclusion $$\iota: K_d\rightarrow K$$ 
is a continuous homomorphism; thus it induces a contractive 
homomorphism 
$$\rho: A(K)\rightarrow B(K_d): u\rightarrow u\circ\iota.$$
Let  $\epsilon >0$. By Proposition \ref{m2} there exists a finite $F\subseteq K$  such that $\chi_F \in A(K_d)$
and 
$$\|\chi_F\|_{A(K_d)}>\epsilon \, .$$ 
Since $K$ is a totally disconnected and separable group, there exists a decreasing {\em sequence} 
of compact-open subgroups such that 
$$\cap_{n=1}^\infty K_n=\{e\}\quad\text{and hence }\; \cap_{n=1}^\infty FK_n=F.$$ 
Now $FK_n$ is a finite disjoint union of sets of the form   $x_iK_n$ where  $x_i\in F$ 
and since each $K_n$ is a compact open subgroup, 
$\chi_{x_iK_n}$ is in $A(K)$ and has norm 1.  
Indeed,  the constant function  1 on the compact group $K_n$ 
  belongs to $A(K_n)$ and has norm 1. It follows from \cite[Proposition 3.21 (1)]{eym} that $\chi_{K_n}$ belongs to $A(K)$ and has norm 1 and hence the same holds for  its translate $\chi_{x_iK_n}$.
 Thus 
$\chi_{FK_n}$ is in $A(K)$ and the sequence 
$(\|\chi_{FK_n}\|_{A(K)})_n$ is bounded
by the cardinality of $F$. 
Since $\rho$ is bounded, the sequence $(\|\chi_{FK_n}\|_{B(K_d)})_n$ is also bounded.
For all $f\in \ell^1(K_d)$, since  $(\chi_{FK_n})_n$ converges pointwise to $\chi_{F}$, 
by dominated convergence we have 
 $$\lim_n\sum_{t\in K_d}f(t)\chi_{FK_n}(t)=\sum_{t\in K_d}f(t)\chi_{F}(t)\, .$$
Since  $\ell^1(K_d)$ is dense in the predual $C^*(K_d)$ of $B(K_d)$, we  obtain 
$$\text{w*-}\lim_n\chi_{FK_n}=\chi_{F}$$ in the weak* topology of $B(K_d).$ 
 Therefore $\sup_n \|\chi_{FK_n}\|_{B(K_d)}>\epsilon$ and hence   
 $$\sup_n\|\chi_{FK_n}\|_{A(K)}>\epsilon $$ 
 which implies that there exists $\chi_{FK_n} \in A(K)$ such that 
 $$\|\chi_{FK_n}\|_{A(K)}>\epsilon \, .$$
This shows that $$\sup \{\|\chi_V\|_{A(H/N)}: V\subseteq H/N,  \chi_V \in A(H/N)\}=+\infty$$
and since $H/N$ is compact, it follows that
$$\sup \{\|\chi_V\|_{cb}: V\subseteq H/N,  \chi_V \in A(H/N)\}=+\infty.$$ 
Let $\pi: H\rightarrow H/N$ be the  quotient map.
It follows from Theorem \ref{quo} that if  $ F\subseteq H/N  $ is  such that 
$ \chi_F \in A(H/N)$, then $\chi_F\circ \pi=\chi_{\pi^{-1}(F)} \in A(H)$ and 
$$\|\chi_F\|_{CB(A(H/N))}=\|\chi_F\circ\pi\|_{CB(A(H))}=\|\chi_{\pi^{-1}(F)}\|_{CB(A(H))}.$$ 
We conclude that 
$$\sup \{\|\chi_{F}\|_{cb}: F\subseteq H,  \chi_F \in A(H)\}=+\infty.$$ 
Let  $F\subseteq H$ be such that $ \chi_{F} \in A(H)$. There  exists $v\in A(G)$  such that 
$v|_{H}=\chi_{F}$ \cite[3.21(1)]{ eym}. By \cite[3.21(2)]{ eym}, $\chi_{H}\in A(G)$ and hence 
$\chi_{F}=v\chi_{H}\in A(G)$.

Since by  \cite[Corollary 6.3(iii)] {spronk} the map 
$$M_{cb}A(G)\rightarrow M_{cb}A(H): u\rightarrow u|_{H}$$
is completely contractive, we obtain
$$\|\chi_{F}\|_{CB(A(G))}\geq \|\chi_{F}|_{H}\|_{CB(A(H))}=\|\chi_{F}\|_{CB(A(H))}\, .$$

We conclude that 
$$\sup \{\|\chi_{F}\|_{cb}: F\subseteq G,  \chi_F \in A(G)\}=+\infty.$$ 
\end{proof}
Note the crucial use of \cite{lm} in obtaining a {\em countable} family $(K_n)$ of compact 
open subgroups with  $\cap_{n=1}^\infty K_n=\{e\}$. 

\medskip

 The proof of the above theorem is not constructive. Below we provide a
different proof for the case where G is an infinite direct product of finite groups.
Ilie and Spronk \cite[Theorem 2.1]{IS} proved that if 
$\chi_F$ is an idempotent in  $B(G)$, then $\|\chi_F\|_{B(G)}=1$ 
if and only if $F$ is a coset of an open subgroup of $G$.
Forrest and Runde in \cite{fr} and Stan in \cite{s} proved that if the cb-norm of an idempotent 
$\chi_F \in B(G)$ satisfies $\|\chi_F\|_{cb} <\frac{2}{\sqrt{3}}$ 
then  $F$ is a coset of an open subgroup of $G$. 
 The `gap' 
$[1,\frac{2}{\sqrt{3}})$ was improved by Mudge and Pham in \cite{mp} to $[1,\frac{1+\sqrt{2}}{2})$.
\begin{proposition}\label{nonab}
Let  $G$ be an infinite direct product of  finite groups. Then 
 $$\sup \{\|\chi_F\|_{cb}: F\subseteq G \;,  \;\chi_F \in A(G)\}=+\infty.$$ 
\end{proposition}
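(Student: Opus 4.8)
The plan is to build the required idempotents explicitly inside finite sub-quotients of $G$ and then transport them to $G$ using the functorial properties in Theorem~\ref{quo}. Write $G=\prod_{i\in I}G_i$ with each $G_i$ finite; since $G$ is infinite, infinitely many factors are nontrivial, so I may fix a countable sequence of indices $i_1,i_2,\dots$ with $G_{i_n}\neq\{e\}$. For each $m$ the subgroup $N_m=\{g\in G: g_{i_1}=\dots=g_{i_m}=e\}=\prod_{i\notin\{i_1,\dots,i_m\}}G_i$ is compact and normal, and $G/N_m\cong P_m:=\prod_{n=1}^{m}G_{i_n}$ is finite. By Theorem~\ref{quo} the map $j_{\pi_m}\colon M_{cb}A(P_m)\to M_{cb}A(G)$, $u\mapsto u\circ\pi_m$, is a complete isometry and, since $N_m$ is compact, carries $A(P_m)$ into $A(G)$; it sends an idempotent $\chi_E$ to the idempotent $\chi_{\pi_m^{-1}(E)}\in A(G)$ of equal cb-norm. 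Thus it suffices to produce, for every $m$, an idempotent in the finite group $P_m$ whose cb-norm tends to $\infty$ with $m$.

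Inside each finite group $G_{i_n}$ I would choose a nontrivial cyclic subgroup $C_n=\langle g_n\rangle\cong\mathbb{Z}/d_n$ with $d_n\geq 2$ (any non-identity element works). Since $P_m$ is finite and discrete, the product $A_m=\prod_{n=1}^{m}C_n$ is an \emph{open} subgroup of $P_m$, so by the complete contractivity of the restriction map $M_{cb}A(P_m)\to M_{cb}A(A_m)$ \cite[Corollary~6.3(iii)]{spronk} any idempotent $\chi_E$ with $E\subseteq A_m$ satisfies $\|\chi_E\|_{CB(A(P_m))}\geq\|\chi_E\|_{CB(A(A_m))}$, exactly as in the last step of the proof of Theorem~\ref{222}. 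Hence the problem reduces to constructing, in the finite \emph{abelian} groups $A_m$, idempotents whose cb-norm grows without bound.

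For finite abelian groups the cb-norm coincides with the Fourier--Stieltjes (here Fourier) norm, which is the averaged $\ell^1$-norm of the Fourier transform; the key point is that this norm is \emph{multiplicative} under direct products, since $\widehat{\chi_{E_1\times E_2}}=\widehat{\chi_{E_1}}\otimes\widehat{\chi_{E_2}}$ gives $\|\chi_{E_1\times E_2}\|=\|\chi_{E_1}\|\,\|\chi_{E_2}\|$. I would therefore build $E$ as a product of blocks, each a \emph{non-coset} idempotent: in a factor $\mathbb{Z}/d_n$ with $d_n\geq 3$ take the doubleton $\{0,1\}$, and pair off any factors with $d_n=2$, taking in $(\mathbb{Z}/2)\times(\mathbb{Z}/2)$ a three-element subset. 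Each such block fails to be a coset of a subgroup, so by the gap theorem of Forrest--Runde \cite{fr} and Stan \cite{s} (improved by Mudge--Pham \cite{mp}) its cb-norm is at least $\tfrac{1+\sqrt2}{2}>1$. Since infinitely many $G_{i_n}$ are nontrivial there are arbitrarily many blocks, and multiplicativity yields that $\|\chi_E\|_{A(A_m)}$ is at least $\bigl(\tfrac{1+\sqrt2}{2}\bigr)^{b}$, where the number $b$ of blocks can be made as large as we please by taking $m$ large.

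The genuinely delicate point is the degeneracy at $\mathbb{Z}/2$: a single copy of $\mathbb{Z}/2$ has only coset idempotents, so no single factor of order $2$ can push the norm above $1$, which is why such factors must be grouped in pairs, where a three-point subset of $(\mathbb{Z}/2)^2$ is no longer a coset and already has cb-norm $3/2$. Everything else is a bookkeeping of the three norm comparisons --- an equality from the quotient map, an inequality from restriction to an open subgroup, and the multiplicative lower bound in the abelian group --- and, in contrast to Theorem~\ref{222}, the resulting idempotents are written down explicitly, which is the point of giving this separate argument.
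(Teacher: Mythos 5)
Your proof is correct, and its skeleton coincides with the paper's: build non-coset idempotents blockwise in finite quotients $\prod_{n=1}^{m}G_{i_n}$, let each block contribute a factor of at least $\tfrac{1+\sqrt2}{2}$ via the Mudge--Pham gap theorem \cite{mp}, and transport the result isometrically to $G$ through Theorem~\ref{quo}. The genuine divergence is in how the factors are shown to multiply. The paper stays inside the (possibly non-abelian) finite product and invokes the identification of $A(G_1\times\dots\times G_n)$ with the operator space projective tensor product $A(G_1)\hat\otimes\dots\hat\otimes A(G_n)$ \cite[Lemma 4.1.2]{kl}, where elementary tensors have norm equal to the product of the norms; it disposes of small factors by lumping several $G_i$ together so that every factor has order at least $3$. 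You instead restrict to the open abelian subgroup $\prod_n C_n$ --- a completely contractive step by \cite[Corollary 6.3(iii)]{spronk}, used in the correct direction and exactly as in the final stage of Theorem~\ref{222} --- and then get multiplicativity from the elementary fact that the $\ell^1$-norm of the Fourier transform is multiplicative on product sets; the price is the pairing of the $\mathbb{Z}/2$ factors, which you handle correctly (a three-point subset of $(\mathbb{Z}/2)^2$ is not a coset and indeed has norm $3/2$). So your route trades the operator-space tensor product machinery for classical commutative Fourier analysis plus one extra restriction inequality; both arguments are complete and both are explicit, which is the stated point of this proposition. The only detail worth adding is what to place in a leftover unpaired $\mathbb{Z}/2$ factor when their number is odd: take $\{0\}$ or the whole factor, which contributes norm $1$ and does not disturb the lower bound $\bigl(\tfrac{1+\sqrt2}{2}\bigr)^{b}$ with $b\to\infty$.
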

\begin{proof} Since $G$ is compact, $A(G)=B(G)$ and    $\|u\|_{cb}=\|u\|_{A(G)}$  for all $u \in A(G)$ 
\cite[Corollary 5.4.11]{kl} and hence it is 
sufficient to prove the proposition for $\|\cdot\|_{A(G)}$. 

Let $G_0$ be a finite   group with $|G_0|\geq 3$. Then there exists $A\subseteq G_0$ such that 
$A$ is not a coset of a subgroup of $G_0$ (for example, take $A$ such that $|A|$ 
does not divide $|G_0|$). Then, since $A\in \Omega_0(G_0)$, it follows from \cite{mp} that 
$\|\chi_A\|_{A(G_0)}\geq \frac{1+\sqrt{2}}{2}$. 
 
Now consider finite  groups $G_1, G_2,\dots, G_n$ with $|G_i|\geq 3$ for all $i=1, 2, \dots, n$
and set $G=\prod_{i=1}^{n} G_i$. Choose $A_i \in G_i$ with 
$\|\chi_{A_i}\|_{A(G_i)}\geq \frac{1+\sqrt{2}}{2}$
and set $A={A_1}\times {A_2}\times\dots\times {A_n}$. 
Since $A(G)$ is isometrically  isomorphic to the operator space projective tensor product 
${A(G_1)}\hat\otimes{A(G_2)}\hat\otimes\dots\hat\otimes {A(G_n)}$ 
\cite[Lemma 4.1.2]{kl} 
we obtain 
$$\|\chi_A\|_{A(G)}=\|\chi_{A_1}\otimes\chi_{A_2}\otimes... \chi_{A_n}\|_{{A(G_1)}
\hat\otimes{A(G_2)}\hat\otimes\dots\hat\otimes {A(G_n)}}
\ge\left(\frac{1+\sqrt{2}}{2}\right)^n.$$
 
Now let $G=\prod_{i=1}^{\infty} G_i$ be an infinite product of finite groups $G_i$. 
Without loss of generality we may assume that $|G_i|\geq 3$ for all $i \in \mathbb N$ 
(lumping together some of the $G_i$'s if necessary).
Set $H_n=\prod_{i=n+1}^{\infty}G_i$ and let $\pi_n$ be the  quotient map $G\rightarrow G/H_n$.
It follows from Theorem \ref{quo}
that the map   $A(G/H_n)\rightarrow A(G): u\mapsto u\circ \pi_n$ is isometric.
Since $G/H_n\simeq\prod_{i=1}^{n} G_i$, we can choose $A \subseteq  G/H_n$ such that 
$\|\chi_A\|_{ {A(G/H_n)}}\geq (\frac{1+\sqrt{2}}{2})^n.$ Setting $F=\pi_n^{-1}(A)$, we obtain 
$\|\chi_F\|_{cb}=\|\chi_F\|_{A(G)}\geq (\frac{1+\sqrt{2}}{2})^n$ and the conclusion follows.
\end{proof}

\begin{corollary}\label{77} Let $G$ be a locally compact group and $G_0$ be the connected component of $e\in G.$ 
If the quotient $G/G_0$ is infinite then 
$$\sup \{\|\chi_F\|_{cb}: F\subseteq G, \chi_F \in B(G)\}=+\infty.$$ 
\end{corollary}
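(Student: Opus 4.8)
The plan is to reduce the statement to the totally disconnected case already settled in Theorem \ref{222}, and then transport the resulting idempotents back to $G$ using the complete isometry supplied by Theorem \ref{quo}.

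First I would invoke two standard structural facts about the identity component $G_0$ of a locally compact group $G$: it is a \emph{closed normal} subgroup, and the quotient $G/G_0$ is \emph{totally disconnected}. Together with the hypothesis that $G/G_0$ is infinite, this places $G/G_0$ exactly in the setting of Theorem \ref{222}, so that
$$\sup\{\|\chi_F\|_{cb}: F\subseteq G/G_0,\ \chi_F\in A(G/G_0)\}=+\infty.$$
In particular, for every $\varepsilon>0$ one may choose a subset $F\subseteq G/G_0$ with $\chi_F\in A(G/G_0)$ and $\|\chi_F\|_{cb}>\varepsilon$; such $\chi_F$ is an idempotent of the Fourier algebra of $G/G_0$.

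Next I would let $\pi:G\to G/G_0$ denote the quotient map and apply Theorem \ref{quo} with the closed normal subgroup $H=G_0$. Since $A(G/G_0)\subseteq B(G/G_0)$, each idempotent $\chi_F$ above lies in $B(G/G_0)$, and hence by the ``moreover'' part of Theorem \ref{quo} its pullback $j_\pi(\chi_F)=\chi_F\circ\pi=\chi_{\pi^{-1}(F)}$ lies in $B(G)$. Because $j_\pi$ is a complete isometry we get $\|\chi_{\pi^{-1}(F)}\|_{cb}=\|\chi_F\|_{cb}>\varepsilon$. As $\varepsilon>0$ was arbitrary, the idempotents $\chi_{\pi^{-1}(F)}\in B(G)$ have arbitrarily large cb-norm, which is precisely the asserted conclusion.

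I expect no genuine obstacle: the result is a direct amalgam of Theorem \ref{222} and Theorem \ref{quo}. The only point requiring attention is that $G_0$ need \emph{not} be compact, so Theorem \ref{quo} guarantees only $j_\pi(\chi_F)\in B(G)$ rather than $j_\pi(\chi_F)\in A(G)$; this is exactly the reason the statement is formulated for idempotents in $B(G)$ instead of $A(G)$. The remaining verifications—that $\chi_F\circ\pi=\chi_{\pi^{-1}(F)}$ and that a pullback of an idempotent is again an idempotent—are immediate.
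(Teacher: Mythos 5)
Your proof is correct and follows exactly the route the paper takes: apply Theorem \ref{222} to the infinite totally disconnected quotient $G/G_0$, then pull the idempotents back to $B(G)$ via the complete isometry $j_\pi$ of Theorem \ref{quo}. Your additional remark explaining why the conclusion lands in $B(G)$ rather than $A(G)$ (since $G_0$ need not be compact) is a useful clarification the paper leaves implicit.
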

\begin{proof}
Since $G/G_0$ is infinite and  totally disconnected,  it follows from  Theorem \ref{222} that
$$\sup \{\|\chi_F\|_{cb}: F\subseteq  G/G_0, \chi_F \in A(G/G_0)\}=+\infty.$$ 

Let $\pi: H\rightarrow G/G_0$ be the quotient  map.  Since $\chi_{\pi^{-1}(F)}=\chi_F\circ\pi$, 
it follows from Theorem \ref{quo} that
$$\sup \{\|\chi_{F}\|_{cb}: F\subseteq  G, \chi_F \in B(G)\}=+\infty.$$ 
\end{proof}

\begin{remark}\label{LN3}  
Let $G$ be a locally compact group and $N$  a closed normal subgroup of $G$. It follows from Theorem \ref{quo}  that 
if
$$\sup \{\|\chi_{F}\|_{cb}: F\subseteq  G/N, \chi_F \in B(G/N)\}=+\infty$$ 
then
$$\sup \{\|\chi_{F}\|_{cb}: F\subseteq  G, \chi_F \in B(G)\}=+\infty.$$ 
\end{remark}

\section{groups with   idempotents with large norms}

Let $G$ be a locally compact group and $G_0$ the connected component of the identity of $G$.
In this section we show that $B(G)$ contains idempotents of arbitrarily large norms 
if and only if $G/G_0$ is infinite.
We also prove a related result for $A(G)$.
It follows from \cite{ho} that an  idempotent is in $B(G)$ if and only if 
it is of the form $\chi_F$ with $F$ in the open coset ring of $G$.

Let $H$ be an open subgroup  of $G$. 
Then  $H \cap G_0$ is open and closed in $G_0$, and hence equal to $G_0$; thus     
$H\supseteq G_0$.
Since  $G_0$ is contained in every open subgroup of $G$, we have that 
that if  $E$ is  an  left coset of an open subgroup in $G$, then  $E=EG_0$.
It is easy to see that 
if $E$  is a  left coset of an open subgroup $G$, we also have $E^c=E^cG_0$ 
(here $E^c$ is the complement of $E$).

\begin{lemma}\label{cr}
 Let $X$ be in the open coset ring  $ \Omega_0(G)$. Then $X=XG_0$. 
 \end{lemma}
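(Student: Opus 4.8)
The plan is to show that the collection of all subsets $X$ of $G$ satisfying $X = XG_0$ forms a ring of sets containing every open coset, and then to invoke the minimality of $\Omega_0(G)$.

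First I would record the relevant structure of $G_0$: it is a closed \emph{normal} subgroup of $G$, so the quotient map $\pi \colon G \to G/G_0$ is a well-defined group homomorphism and $XG_0 = G_0X$ for every $X \subseteq G$. Since $e \in G_0$ we always have $X \subseteq XG_0$, and I would observe the key reformulation
$$ XG_0 = \bigcup_{x \in X} xG_0 = \pi^{-1}(\pi(X)). $$
Consequently $X = XG_0$ holds if and only if $X = \pi^{-1}(\pi(X))$, i.e. if and only if $X$ is saturated for the partition of $G$ into cosets of $G_0$; equivalently, $X = \pi^{-1}(Y)$ for some $Y \subseteq G/G_0$.

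Next I would set $\mathcal{R} = \{ \pi^{-1}(Y) : Y \subseteq G/G_0 \}$ and check that $\mathcal{R}$ is a ring of subsets of $G$ (in fact a Boolean subalgebra of the power set). This is immediate from the fact that preimage under $\pi$ commutes with all set-theoretic operations and that $\pi$ is surjective: one has $\pi^{-1}(Y_1) \cup \pi^{-1}(Y_2) = \pi^{-1}(Y_1 \cup Y_2)$, $\pi^{-1}(Y_1) \cap \pi^{-1}(Y_2) = \pi^{-1}(Y_1 \cap Y_2)$, and $(\pi^{-1}(Y))^c = \pi^{-1}(Y^c)$, so each of these sets lies again in $\mathcal{R}$. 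Moreover, the discussion immediately preceding the lemma shows that every left coset $E$ of an open subgroup of $G$ satisfies $E = EG_0$, hence $E \in \mathcal{R}$; thus $\mathcal{R}$ contains all the open cosets that generate $\Omega_0(G)$.

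Finally, since $\Omega_0(G)$ is by definition the smallest ring of sets containing the open cosets of $G$, and $\mathcal{R}$ is a ring of sets containing all of them, I conclude $\Omega_0(G) \subseteq \mathcal{R}$. Hence every $X \in \Omega_0(G)$ lies in $\mathcal{R}$, which is precisely the assertion $X = XG_0$. I do not expect a genuine obstacle here; the only point requiring a little care is closure under complementation, where one uses that $\pi$ is onto so that complements of preimages are preimages of complements, together with the observation (already available from the text) that the generating open cosets are themselves $G_0$-saturated.
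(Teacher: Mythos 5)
Your proposal is correct and follows essentially the same strategy as the paper: both arguments show that the family of $G_0$-saturated sets is a ring containing the open cosets (the paper verifies closure under intersection by a direct element computation and appeals to the preceding remark for complements, while you package the same facts via the identity $XG_0=\pi^{-1}(\pi(X))$). The reformulation in terms of preimages under the quotient map is a clean way to organize the identical argument, so nothing further is needed.
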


\begin{proof}
We show that if  $X=XG_0$  and $Y=YG_0$, then  $X\cap Y=(X\cap Y)G_0$. 
Indeed, let $z \in X\cap Y$  and $g \in G_0$. Then $z=xg'=yg''$ for some $x \in X$, 
$y \in Y$   and $g', g'' \in G_0$.
Then  $zg=xg'g=yg''g$ and  since $xg'g \in X$ and $ yg''g \in Y$ we obtain   $zg \in X\cap Y$.
In view of the above remark on complements,  the assertion follows.
\end{proof}

\begin{corollary}\label{fin}
Let $\phi$ be the map defined on $\Omega_0(G)$ by    $ X\mapsto q(X)$ 
(where $q:G\to G/G_0$ is the quotient map). Then  $\phi$ is a ring isomorphism from 
$\Omega_0(G)$ onto $\Omega_0(G/G_0)$.
 \end{corollary}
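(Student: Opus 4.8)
The plan is to verify that $\phi$ is a well-defined, bijective map that preserves the Boolean operations of the coset ring, from which the assertion that it is a ring isomorphism follows. First I would record the elementary facts about the quotient map $q:G\to G/G_0$: it is a continuous, open, surjective homomorphism. Consequently $q$ carries each open subgroup $H\supseteq G_0$ of $G$ to an open subgroup $q(H)$ of $G/G_0$, and each left coset $gH$ to the left coset $q(g)q(H)$. Conversely, every open subgroup of $G/G_0$ is of the form $q(H)$ for a unique open subgroup $H$ of $G$ containing $G_0$ (namely $H=q^{-1}(q(H))$, since $G_0=\ker q$ and every open subgroup contains $G_0$), and likewise every open coset of $G/G_0$ is the $q$-image of a unique open coset of $G$. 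Thus $q$ restricts to a bijection between the open cosets of $G$ and those of $G/G_0$, i.e.\ between the generating sets of the two coset rings.

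Next I would exploit Lemma \ref{cr}: every $X\in\Omega_0(G)$ satisfies $X=XG_0=q^{-1}(q(X))$, so $X$ is a union of $G_0$-cosets (that is, $X$ is saturated). For such saturated sets the operations transport cleanly along $q$. Unions are preserved by any map, $q(X\cup Y)=q(X)\cup q(Y)$. For intersections one checks $q(X\cap Y)=q(X)\cap q(Y)$ directly from saturation of $X$ and $Y$. For complements, using surjectivity of $q$ together with saturation, one has $q(X)\cap q(G\setminus X)=\emptyset$ while $q(X)\cup q(G\setminus X)=G/G_0$, whence $q(G\setminus X)=(G/G_0)\setminus q(X)$; here I note that $G\in\Omega_0(G)$, since $G$ is a coset of itself, so complements stay inside the ring. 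Hence $\phi$ is a homomorphism of Boolean rings of sets.

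It then remains to settle bijectivity. Injectivity is immediate from saturation: if $q(X)=q(Y)$ with $X,Y\in\Omega_0(G)$, then $X=q^{-1}(q(X))=q^{-1}(q(Y))=Y$. For surjectivity I would argue that, since $\phi$ preserves the Boolean operations, the image $\phi(\Omega_0(G))$ is itself a ring of subsets of $G/G_0$; by the first paragraph it contains every open coset of $G/G_0$, and it is contained in the ring generated by these open cosets, so $\phi(\Omega_0(G))=\Omega_0(G/G_0)$. Combining, $\phi$ is a bijective homomorphism of Boolean rings, i.e.\ a ring isomorphism of $\Omega_0(G)$ onto $\Omega_0(G/G_0)$.

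The only genuinely delicate point is the behaviour under complements, where surjectivity of $q$ and the saturation property of Lemma \ref{cr} are both essential: without saturation, $q$ need not send complements to complements. Everything else is a routine transfer of set operations along a surjection whose relevant domain-sets are unions of fibres.
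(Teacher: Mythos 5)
Your proof is correct and follows essentially the same route as the paper's: the paper likewise uses Lemma \ref{cr} (saturation) to see that $\phi$ preserves intersections and complements, checks surjectivity by noting that every open coset of $G/G_0$ is $\phi$ of an open coset of $G$, and deduces injectivity from $XG_0=YG_0\Rightarrow X=Y$. You simply spell out in more detail the steps the paper labels ``clearly''.
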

\begin{proof}
  Clearly $\phi(X\cap Y)=\phi(X)\cap\phi(Y)$ and $\phi(X^c)=\phi(X)^c$.
Let $aK$ be a coset of an open subgroup in $G/G_0$. 
Then $\phi^{-1}(aK)$ is a coset of an open subgroup in $G$ and 
$\phi(\phi^{-1}(aK))=aK$. Finally, $XG_0=YG_0$ for $X, Y \in \Omega_0(G)$ implies $X=Y$, hence $\phi$ is injective.
\end{proof}

\begin{theorem}\label{equiv}
 Let $G$ be a locally compact group and $G_0$ be the connected component of $e\in G.$ 
 The following are equivalent
 \begin{enumerate}
  \item    The quotient $G/G_0$ is infinite and $G_0$ is compact
    
  \item  $\;\sup \{\|\chi_F\|_{cb}: \chi_F\in A(G)\}=+\infty.$
  
  \item  $\;\sup \{\|\chi_F\|_{A(G)}: \chi_F \in A(G)\}=+\infty.$
 \end{enumerate}
\end{theorem}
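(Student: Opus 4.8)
The plan is to prove the cycle $(1)\Rightarrow(3)\Rightarrow(2)\Rightarrow(1)$. The implication $(3)\Rightarrow(2)$ is immediate: for any idempotent $\chi_F\in A(G)$ one has $\|\chi_F\|_{cb}\le\|\chi_F\|_{B(G)}=\|\chi_F\|_{A(G)}$ (the $A$- and $B$-norms agreeing on $A(G)$), so an unbounded family of $A(G)$-norms forces an unbounded family of cb-norms.

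For $(2)\Rightarrow(1)$ I would argue by contraposition, treating separately the two ways in which $(1)$ can fail. Recall that $A(G)\subseteq C_0(G)$, so an idempotent $\chi_F\in A(G)$ must have $F$ clopen and compact, and by Host's theorem \cite{ho} $F\in\Omega_0(G)$, whence $F=FG_0$ by Lemma \ref{cr}. If $G_0$ is non-compact and $F\neq\emptyset$, then $F$ contains a full coset $xG_0$, which is closed and hence a compact subset, forcing $G_0$ to be compact, a contradiction; thus the only idempotent in $A(G)$ is $0$ and the supremum in $(2)$ is $0$. If instead $G/G_0$ is finite, then $\Omega_0(G/G_0)$ is the finite ring of all subsets of $G/G_0$, so by Corollary \ref{fin} the ring $\Omega_0(G)$ is finite; hence $B(G)$ contains only finitely many idempotents and the supremum over the subfamily lying in $A(G)$ is finite. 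In either case $(2)$ fails.

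The substantial direction is $(1)\Rightarrow(3)$, where the point is to carry the genuine $A$-norm (not merely the cb-norm) through the reduction. Writing $D=G/G_0$, which is infinite and totally disconnected, I would first establish the $A$-norm refinement $\sup\{\|\chi_F\|_{A(D)}:\chi_F\in A(D)\}=+\infty$ of Theorem \ref{222}. By van Dantzig \cite{dantz} there is a compact open subgroup $H\le D$; if $H$ is finite then $D$ is discrete and Corollary \ref{76} applies, while if $H$ is infinite the argument of Theorem \ref{222}, via the countable product quotient $K=H/N$ furnished by \cite{lm} and the weak-* dominated-convergence step, already yields $\sup\|\chi_V\|_{A(K)}=+\infty$, which lifts $A$-isometrically to $H$ by Theorem \ref{quo} since $N$ is compact. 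To pass from $H$ to $D$ I would use that extension by zero from an open subgroup is $A$-isometric: for $F\subseteq H$ with $\chi_F\in A(H)$, picking $v\in A(D)$ with $v|_H=\chi_F$ (Eymard \cite[3.21]{eym}) and the multiplier $\chi_H$ with $\|\chi_H\|_{cb}\le 1$ gives $\chi_F=v\chi_H\in A(D)$ and $\|\chi_F\|_{A(D)}\le\|v\|_{A(D)}$; taking the infimum over extensions $v$ (the restriction map being an $A$-norm quotient map) yields $\|\chi_F\|_{A(D)}\le\|\chi_F\|_{A(H)}$, and contractivity of restriction gives the reverse inequality, so $\|\chi_F\|_{A(D)}=\|\chi_F\|_{A(H)}$ and the supremum over $D$ is infinite.

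Finally, since $G_0$ is a compact closed normal subgroup, Theorem \ref{quo} shows that $q^{*}\colon A(D)\to A(G)$, $u\mapsto u\circ q$, maps into $A(G)$; combined with the isometry of the canonical embedding $B(G/G_0)\hookrightarrow B(G)$ and the coincidence of the $A$- and $B$-norms on $A(G)$, this map is $A$-isometric. Applying it to the idempotents $\chi_F\in A(D)$ of large norm produces idempotents $\chi_{q^{-1}(F)}=\chi_F\circ q\in A(G)$ with $\|\chi_{q^{-1}(F)}\|_{A(G)}=\|\chi_F\|_{A(D)}$, giving $(3)$. I expect the main obstacle to be precisely this book-keeping of the $A$-norm: Theorem \ref{222} as stated controls only cb-norms in its final passage between groups, so the compactness of $G_0$ (and of the intermediate kernel $N$) must be invoked exactly to upgrade the relevant embeddings from cb-isometries to genuine $A$-norm isometries.
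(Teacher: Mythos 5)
Your step $(3)\Rightarrow(2)$ is not valid as written, and this breaks your logical cycle. The inequality you invoke, $\|\chi_F\|_{cb}\le\|\chi_F\|_{B(G)}=\|\chi_F\|_{A(G)}$, goes the wrong way: it shows that a large cb-norm forces a large $A(G)$-norm, i.e.\ it proves $(2)\Rightarrow(3)$, not $(3)\Rightarrow(2)$. An unbounded family of $A(G)$-norms is, as far as this inequality is concerned, perfectly compatible with all the cb-norms staying bounded, and no reverse estimate $\|\chi_F\|_{A(G)}\le C\|\chi_F\|_{cb}$ is available or invoked. With the pieces you actually establish --- $(1)\Rightarrow(3)$, $(2)\Rightarrow(1)$, and (implicitly, since your contrapositive argument bounds the total number of idempotents and hence both suprema) $(3)\Rightarrow(1)$ --- statement (2) is never derived from anything, so the three-fold equivalence is not proved. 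This is precisely why the paper states Theorem \ref{222} for the cb-norm, the smaller of the two: it proves $(1)\Rightarrow(2)$ directly by applying Theorem \ref{222} to the infinite totally disconnected group $G/G_0$ and transferring via the complete isometry of Theorem \ref{quo} (compactness of $G_0$ guaranteeing that one lands in $A(G)$), and only then deduces (3) from (2) by the norm inequality.

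The repair is short: your own $(1)\Rightarrow(3)$ construction already yields (2), because on the compact group $K=H/N$ the cb-norm and the $A(K)$-norm coincide, and each subsequent transfer (lifting along the compact-kernel quotients, extension by zero from the open subgroup) is at worst cb-norm non-decreasing; equivalently, just quote Theorem \ref{222} as stated. The remaining steps are essentially sound: your contrapositive for $(2)\Rightarrow(1)$ is the same argument the paper uses for $(3)\Rightarrow(1)$ (Lemma \ref{cr} excludes nonzero idempotents of $A(G)$ when $G_0$ is non-compact, and Corollary \ref{fin} leaves only finitely many when $G/G_0$ is finite), and your $A$-norm bookkeeping in $(1)\Rightarrow(3)$, though more laborious than the paper's route through the cb-norm, is correct granted the standard facts that restriction to an open subgroup is a quotient map of Fourier algebras and that the $A$- and $B$-norms agree on $A(G)$.
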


\begin{proof} That (1) implies (2) follows from Theorems \ref{quo} and \ref{222}. 
   
That  (2) implies (3) follows since $\|\chi_F\|_{cb}\le\|\chi_F\|_{A(G)}$.

We show that (3) implies (1): If $G_0$ is not compact, it follows from Lemma \ref{cr} 
that  there are no idempotents 
in $A(G)$. If $G/G_0$ is finite,  the open coset ring is finite from Corollary \ref{fin}, and hence 
the set of idempotents is finite.
\end{proof}
  
 The proof of the following theorem is similar.
\begin{theorem}\label{equiv2}
 Let $G$ be a locally compact group and $G_0$ be the connected component of $e\in G.$ 
 The following are equivalent
   \begin{enumerate}
  \item    The quotient $G/G_0$ is infinite.
      \item  $\;\sup \{\|\chi_F\|_{cb}: \chi_F\in B(G)\}=+\infty.$
  \item  $\;\sup \{\|\chi_F\|_{B(G)}: \chi_F \in B(G)\}=+\infty.$
 \end{enumerate}
\end{theorem}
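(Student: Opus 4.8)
The plan is to run the three-implication cycle $(1)\Rightarrow(2)\Rightarrow(3)\Rightarrow(1)$, paralleling the proof of Theorem~\ref{equiv} but with one crucial simplification: the compactness hypothesis on $G_0$ no longer appears. The reason is that we now work in $B(G)$ rather than $A(G)$. Whereas an idempotent can lie in $A(G)$ only when $G_0$ is compact (by Lemma~\ref{cr}), Host's theorem guarantees that \emph{every} $\chi_F$ with $F\in\Omega_0(G)$ already lies in $B(G)$, with no constraint on $G_0$ whatsoever. This is precisely why condition (1) here need only assert that $G/G_0$ is infinite.

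For $(1)\Rightarrow(2)$ there is essentially nothing left to prove: this is exactly the content of Corollary~\ref{77}, which states that if $G/G_0$ is infinite then $\sup\{\|\chi_F\|_{cb}:\chi_F\in B(G)\}=+\infty$. That corollary is where the genuine analytic work resides, resting on Theorem~\ref{222} and hence on the van Dantzig and Leiderman--Morris--Tkachenko \cite{lm} machinery; so I would simply cite it.

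For $(2)\Rightarrow(3)$ I would invoke the inequality $\|u\|_{cb}\le\|u\|_{B(G)}$, valid for every $u\in B(G)$ \cite[Corollary 1.8]{decahaa}. Applied to each idempotent $\chi_F\in B(G)$ it gives $\|\chi_F\|_{cb}\le\|\chi_F\|_{B(G)}$, so the supremum over $B(G)$-norms dominates the supremum over cb-norms; if the latter is $+\infty$, so is the former.

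For $(3)\Rightarrow(1)$ I would argue by contraposition. Assuming $G/G_0$ finite, the group $G/G_0$ has only finitely many subsets, so its open coset ring $\Omega_0(G/G_0)$ is finite; by the ring isomorphism $\phi\colon\Omega_0(G)\to\Omega_0(G/G_0)$ of Corollary~\ref{fin}, the ring $\Omega_0(G)$ is finite as well. By Host's characterization \cite{ho} recalled at the start of this section, the idempotents of $B(G)$ are exactly the functions $\chi_F$ with $F\in\Omega_0(G)$; hence there are only finitely many of them, and $\sup\{\|\chi_F\|_{B(G)}:\chi_F\in B(G)\}$ is a maximum over a finite set, therefore finite. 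This contradicts (3) and closes the cycle. The only point requiring care is verifying that the finiteness of the coset ring genuinely exhausts all idempotents of $B(G)$, which is exactly where Host's theorem is indispensable; beyond this bookkeeping I do not expect any real obstacle, since every delicate estimate has already been absorbed into Corollary~\ref{77}.
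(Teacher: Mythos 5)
Your proposal is correct and follows essentially the same route as the paper, which simply declares the proof ``similar'' to that of Theorem \ref{equiv}: $(1)\Rightarrow(2)$ via Corollary \ref{77}, $(2)\Rightarrow(3)$ via $\|\cdot\|_{cb}\le\|\cdot\|_{B(G)}$, and $(3)\Rightarrow(1)$ via the finiteness of $\Omega_0(G)$ from Corollary \ref{fin} together with Host's characterization of idempotents. Your observation that the compactness hypothesis on $G_0$ drops out because Host's theorem places every $\chi_F$ with $F\in\Omega_0(G)$ in $B(G)$ is exactly the right adaptation.
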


\section{norms of  homomorphisms}
In this section we show that if $G$ is a locally compact group 
with connected component of the identity $G_0$ such that $G/G_0$ is infinite, 
and $H$ is a locally compact group, then 
there exist homomorphisms of arbitrarily large norm from $A(H)$ into $B(G)$. We also prove that if 
there  exists an amenable group $H$ such that  homomorphisms of arbitrarily large 
norm from $A(H)$ into $B(G)$ exist, then  $G/G_0$ is infinite.

\begin{proposition} \label{511}
Let $G, H$  be locally compact groups and  $F\in\Omega_0(G)$. For  $u\in A(H)$ we define 
$$\rho_F(u)(t)=\left\{\begin{array}{ll} u(e) , & t\in F\\  
0, & t\notin F\end{array}\right.$$
Then  the map $\rho_F$ is  a completely bounded homomorphism $A(H)\rightarrow B(G)$ and  
$$\|\rho_F\|_{cb}=\|\rho_F\|=\|\chi_F\|_{B(G)}.$$
\end{proposition}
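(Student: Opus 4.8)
The plan is to prove the three equalities $\|\rho_F\|_{cb}=\|\rho_F\|=\|\chi_F\|_{B(G)}$ by establishing (a) that $\rho_F$ is a well-defined completely bounded homomorphism, and (b) the chain of inequalities $\|\chi_F\|_{B(G)}\ge\|\rho_F\|_{cb}\ge\|\rho_F\|\ge\|\chi_F\|_{B(G)}$, which forces equality throughout (the middle inequality $\|\rho_F\|_{cb}\ge\|\rho_F\|$ being automatic). First I would observe that $\rho_F$ is exactly a map of the Ilie--Spronk form \eqref{star2}: take the piecewise affine map $\alpha\colon F\to H$ to be the constant map $\alpha\equiv e$, which is affine on the open coset $G$ (its defining homomorphism is trivial), and note $F\in\Omega_0(G)$ is open and closed because every member of the open coset ring is clopen. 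Then $\rho_F(u)(t)=u(\alpha(t))=u(e)$ for $t\in F$ and $0$ otherwise, so by the cited result of Ilie and Spronk (\cite[Proposition 3.1]{IS}) $\rho_F$ is a completely bounded homomorphism $A(H)\to B(G)$, giving the first assertion.

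For the norm computation, the key structural fact is that $\rho_F$ factors through evaluation at $e$. Writing $\delta_e\colon A(H)\to\bb C$, $u\mapsto u(e)$ for the evaluation functional, we have $\rho_F(u)=u(e)\,\chi_F$, i.e. $\rho_F=\chi_F\otimes\delta_e$ as a rank-one-type map $A(H)\to B(G)$. The upper bound is then immediate: for $u\in A(H)$,
\[
\|\rho_F(u)\|_{B(G)}=|u(e)|\,\|\chi_F\|_{B(G)}\le\|u\|_{A(H)}\,\|\chi_F\|_{B(G)},
\]
using that $|u(e)|=|\dua{u,\delta_e}|\le\|u\|_{A(H)}$ since $\delta_e$ is a contractive functional on $A(H)$ (indeed $\delta_e$ is the matrix coefficient of the trivial representation, of norm one). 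Hence $\|\rho_F\|\le\|\chi_F\|_{B(G)}$, and the same factorization at the level of matrix amplifications gives $\|\rho_F\|_{cb}\le\|\chi_F\|_{B(G)}$ once we check that $\delta_e$ is completely contractive --- which holds because $\delta_e$, being a scalar-valued functional on the operator space $A(H)$ arising from a one-dimensional representation, has $\|\delta_e\|_{cb}=\|\delta_e\|=1$.

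For the reverse inequality it suffices to exhibit a single $u\in A(H)$ with $\|u\|_{A(H)}\le 1$ and $u(e)=1$, for then $\rho_F(u)=\chi_F$ yields $\|\rho_F\|\ge\|\rho_F(u)\|_{B(G)}=\|\chi_F\|_{B(G)}$. Such a $u$ exists: any normalized positive-definite function, e.g. $u=\dua{\gl(\cdot)\xi,\xi}$ for a suitable unit vector $\xi$, or more simply a compactly supported normalized element of $A(H)$ with $u(e)=\|u\|_{A(H)}=1$ (the existence of norm-one idempotent-like bumps valued $1$ at $e$ in $A(H)$ is standard; when $H$ is not discrete one uses $\chi_K/\|\chi_K\|$ for a compact open set or a regularization). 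The main point to verify carefully is precisely that $\|u\|_{A(H)}=1$ with $u(e)=1$ is simultaneously achievable, so that no constant worse than $1$ is lost; I expect this normalization --- matching $u(e)=1$ against $\|u\|_{A(H)}=1$ exactly --- to be the only delicate step, and it is resolved by the classical fact that $\|u\|_{B(H)}\ge u(e)$ for positive-definite $u$ with equality for states. Combining $\|\rho_F\|\le\|\chi_F\|_{B(G)}\le\|\rho_F\|\le\|\rho_F\|_{cb}\le\|\chi_F\|_{B(G)}$ collapses the chain and gives $\|\rho_F\|_{cb}=\|\rho_F\|=\|\chi_F\|_{B(G)}$.
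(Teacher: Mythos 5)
Your proposal is correct and follows essentially the same route as the paper: invoke the Ilie--Spronk result to get a completely bounded homomorphism, bound $\|\rho_F\|$ above by $\|\chi_F\|_{B(G)}$ via $|u(e)|\le\|u\|_{A(H)}$, bound it below by evaluating at a norm-one $u$ with $u(e)=1$, and reduce the cb-norm to the ordinary norm using the one-dimensionality of the range (your factorization through the complete contraction $\delta_e$ is just a more explicit version of the paper's remark that the image of $\rho_F$ is one-dimensional). You are in fact slightly more careful than the paper in justifying the existence of the witness $u$ with $u(e)=\|u\|_{A(H)}=1$ via normalized positive-definite functions.
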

\begin{proof} 
It follows from   \cite[Theorem 3.1]{IS} that the map $\rho_F$ is  a completely bounded homomorphism. 
Choose $u\in A(H)$ such that $u(e)=1$ and  $\|u\|_{A(H)}\leq 1 $. Then 
$$ \|\rho_F\|\geq  \|\rho_F(u)\|_{B(G)}= \|u(e)\chi_F\|_{B(G)}= \|\chi_F\|_{B(G)}.$$ 
We also have, for  $u\in A(H)$, 
$$\|\rho_F(u)\|_{B(G)}=\|u(e)\chi_F\|_{B(G)}=|u(e)|\|\chi_F\|_{B(G)}\leq \|u\|_{A(H)}\|\chi_F\|_{B(G)}\, ,$$ 
and hence
$\|\rho_F\|=\|\chi_F\|_{B(G)}.$ 

  Since the image of $\rho_F$ is  one-dimensional,  
it follows that   $$\|\rho_F\|_{cb}=\|\rho_F\|=\|\chi_F\|_{B(G)}.$$
\end{proof}
Applying Theorem \ref{equiv2} to Proposition \ref{511}, we obtain the following
\begin{corollary}
\label{8}
Let $G, H$  be  locally compact groups and assume that 
$$\sup \{\|\chi_F\|_{cb}: F\in\Omega_0(G)\}=+\infty \, .$$  Then 
$$\sup\{\|\rho: A(H)\rightarrow B(G)\|: \rho  \text{ is a cb homomorphism}\}=+\infty \, .$$
\end{corollary}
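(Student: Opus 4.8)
The plan is to chain together two facts that are already available in the excerpt. The statement of Corollary~\ref{8} is the combination of Proposition~\ref{511}, which converts the norm of the idempotent $\chi_F$ into the norm of the homomorphism $\rho_F$, and Theorem~\ref{equiv2}, which tells us when such idempotents can have arbitrarily large norm. So the entire proof is a short deduction, and the only real work is checking that the hypotheses line up.

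First I would unpack the hypothesis. We are assuming
$$\sup \{\|\chi_F\|_{cb}: F\in\Omega_0(G)\}=+\infty.$$
By the equivalence (2)$\Leftrightarrow$(1) in Theorem~\ref{equiv2}, recalling that the idempotents of $B(G)$ are exactly the $\chi_F$ with $F\in\Omega_0(G)$ (as noted at the start of Section~3 via \cite{ho}), this hypothesis is equivalent to $G/G_0$ being infinite; but for the deduction I do not even need to invoke that equivalence, since I can work directly with the given supremum.

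Next, for each $F\in\Omega_0(G)$ I would apply Proposition~\ref{511}, which produces a completely bounded homomorphism $\rho_F:A(H)\to B(G)$ satisfying
$$\|\rho_F\| = \|\rho_F\|_{cb} = \|\chi_F\|_{B(G)} \geq \|\chi_F\|_{cb},$$
where the last inequality is the standard estimate $\|\chi_F\|_{cb}\le\|\chi_F\|_{B(G)}$ from \cite[Corollary 1.8]{decahaa}. Taking the supremum over all $F\in\Omega_0(G)$ and using the hypothesis gives
$$\sup\{\|\rho\|: \rho \text{ a cb homomorphism } A(H)\to B(G)\} \geq \sup_{F\in\Omega_0(G)}\|\rho_F\| \geq \sup_{F\in\Omega_0(G)}\|\chi_F\|_{cb} = +\infty,$$
which is exactly the desired conclusion.

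The deduction is genuinely routine, so there is no substantial obstacle; the only point that requires a moment's care is making sure that the norm in Proposition~\ref{511} is matched to the quantity appearing in the hypothesis. Proposition~\ref{511} gives equality with $\|\chi_F\|_{B(G)}$, while the hypothesis is phrased with $\|\chi_F\|_{cb}$; the inequality $\|\chi_F\|_{cb}\le\|\chi_F\|_{B(G)}$ resolves this in the direction we need, so no equivalence between the two norms is required. (In fact Theorem~\ref{equiv2} shows both suprema are simultaneously finite or infinite, so the gap is immaterial.)
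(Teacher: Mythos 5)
Your proposal is correct and follows essentially the same route as the paper, which obtains the corollary by combining Proposition~\ref{511} with the comparison between the cb-norm and the $B(G)$-norm of $\chi_F$ (the paper cites Theorem~\ref{equiv2} for this, while you correctly observe that only the elementary inequality $\|\chi_F\|_{cb}\le\|\chi_F\|_{B(G)}$ is needed). No issues.
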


\begin{theorem}\label{LN4}  Let $G$ be a locally compact group
and $G_0$ be the connected component of $e\in G.$ The following are equivalent: 
$$ (i) \quad \sup\{\|\rho: A(H)\rightarrow B(G)\| :\rho \text{ is a cb homomorphism}\}=+\infty.$$
for every locally compact group $H$.

(ii) There exists an amenable locally compact group $H$ such that 
$$\sup\{\|\rho: A(H)\rightarrow B(G)\| :\rho \text{ is a cb homomorphism}\}=+\infty.$$

(iii) The group $G/G_0$ is infinite.

(iv) $\sup \{\|\chi_F\|_{B(G)}: F\in \Omega_0(G)\}=+\infty.$ 

\end{theorem}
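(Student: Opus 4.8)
The plan is to close the cycle $(i)\Rightarrow(ii)\Rightarrow(iii)\Rightarrow(iv)\Rightarrow(i)$. Three of the arrows are immediate from results already in hand; the real content is $(ii)\Rightarrow(iii)$, which is exactly where the amenability of $H$ enters.

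For the easy arrows, $(i)\Rightarrow(ii)$ I would get by simply noting that amenable locally compact groups exist (e.g. $\bb Z$), so $(i)$ applied to such an $H$ gives $(ii)$. For $(iii)\Rightarrow(iv)$ I would appeal to Theorem \ref{equiv2}: by Host's theorem the idempotents of $B(G)$ are precisely the $\chi_F$ with $F\in\Omega_0(G)$, so $(iv)$ is literally the statement $\sup\{\|\chi_F\|_{B(G)}:\chi_F\in B(G)\}=+\infty$, which Theorem \ref{equiv2} equates with the infinitude of $G/G_0$. For $(iv)\Rightarrow(i)$ I would use Proposition \ref{511}: choosing $F_n\in\Omega_0(G)$ with $\|\chi_{F_n}\|_{B(G)}\to\infty$, the maps $\rho_{F_n}:A(H)\to B(G)$ are cb homomorphisms with $\|\rho_{F_n}\|=\|\chi_{F_n}\|_{B(G)}\to\infty$ for every locally compact $H$.

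It remains to prove $(ii)\Rightarrow(iii)$, which I would do by contraposition: assuming $G/G_0$ finite, I would show that the cb homomorphisms $A(H)\to B(G)$ have norm bounded by a constant independent of the amenable group $H$. By Corollary \ref{fin} the ring $\Omega_0(G)\cong\Omega_0(G/G_0)$ is then finite; let $N$ be the number of its atoms and set $M=\max\{\|\chi_Z\|_{B(G)}:Z\in\Omega_0(G)\}<\infty$. Given a cb homomorphism $\rho$, amenability of $H$ lets me invoke the Ilie and Spronk characterization \cite{IS}, so $\rho$ has the form \eqref{star2} for an open--closed $Y\subseteq G$ and a piecewise affine $\alpha:Y\to H$; writing $Y=\bigcup_{i=1}^m Y_i$ as a disjoint union with $Y_i\in\Omega_0(G)$ and $\alpha|_{Y_i}$ extending to an affine $\alpha_i:C_i\to H$ on an open coset $C_i\supseteq Y_i$, the disjointness forces $m\le N$ and $\rho(u)=\sum_{i=1}^m\chi_{Y_i}(u\circ\alpha_i)$.

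The main obstacle is the uniform bound on a single affine piece $\rho_i:u\mapsto\chi_{Y_i}(u\circ\alpha_i)$, which \cite{IS} already guarantees is a cb homomorphism into $B(G)$. Writing $\alpha_i(t)=s_i\theta_i(r_it)$ on $C_i=r_i^{-1}K_i$, I would use that translations in $G$ and $H$ act isometrically, that composition with the continuous homomorphism $\theta_i:K_i\to H$ is contractive from $B(H)$ into $B(K_i)$ (so $u\in A(H)$ maps to a function in $B(K_i)$ of norm $\le\|u\|_{A(H)}$), and that, $K_i$ being open, this function extends into $B(G)$; multiplying by $\chi_{Y_i}$ and using that $B(G)$ is a Banach algebra then gives $\|\rho_i(u)\|_{B(G)}\le M^2\|u\|_{A(H)}$, the two idempotent factors $\|\chi_{K_i}\|_{B(G)}$ and $\|\chi_{Y_i}\|_{B(G)}$ both being $\le M$. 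Summing over the at most $N$ pieces yields $\|\rho\|\le NM^2$, independent of $\rho$ and $H$, which negates $(ii)$. The delicate point to get right is the controlled extension across the open subgroup $K_i$ — that passing an affine piece from $K_i$ up to $G$ costs only the norm of an idempotent lying in the finite ring $\Omega_0(G)$, precisely the quantity Theorem \ref{equiv2} keeps bounded once $G/G_0$ is finite.
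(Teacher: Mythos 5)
Your proposal is correct, and its overall architecture --- the cycle $(i)\Rightarrow(ii)\Rightarrow(iii)\Rightarrow(iv)\Rightarrow(i)$, with the easy arrows handled by Theorem \ref{equiv2}, Proposition \ref{511} (equivalently Corollary \ref{8}), and the existence of amenable groups --- coincides with the paper's. The only substantive difference is in $(ii)\Rightarrow(iii)$: after invoking the Ilie--Spronk structure theorem for cb homomorphisms out of $A(H)$ with $H$ amenable and the finiteness of $\Omega_0(G)$ from Corollary \ref{fin}, the paper simply quotes the quantitative estimate of \cite[Proposition 3.1]{IS}, namely $\|\rho\|_{cb}\leq m\sum_{F\in\Omega_0(G)}\|\chi_F\|_{B(G)}\leq m^2\max_F\|\chi_F\|_{B(G)}$, whereas you rederive a bound of the same shape ($\|\rho\|\le NM^2$) by hand, estimating each affine piece via the isometry of translations, the contractivity of composition with a continuous homomorphism $B(H)\to B(K_i)$, the isometric extension (by zero) from the open subgroup $K_i$ up to $B(G)$, and the Banach algebra inequality for the cut-down by $\chi_{Y_i}$. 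Your version is self-contained where the paper's is a citation, at the cost of having to justify the extension step across the open subgroup (which is indeed a standard isometric fact, realizable via the induced representation, so your ``delicate point'' is sound); the constants differ but both are uniform over $H$ and $\rho$, which is all that is needed to contradict $(ii)$.
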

\begin{proof}
Clearly $(i)$  implies $ (ii)$. The equivalence $(iii)\Leftrightarrow (iv)$  
follows from Theorem \ref{equiv2}. 
Also the implication $(iv)\Rightarrow (i)$ follows from Corollary \ref{8}. 

It remains to show that $(ii)$ implies $(iii)$. 
Suppose not, i.e. that $|G/G_0|<+\infty.$ 
By Corollary \ref{fin}, there exists $m \in \mathbb N$ such that $|\Omega_0(G)|\leq m.$
Let $ \rho: A(H)\rightarrow B(G)$  be a completely bounded homomorphism.  
By  \cite[Theorem 3.7]{IS}, $\rho$ is of the 
form \ref{star2} for some $Y \in \Omega_0(G)$ and a piecewise affine map $\alpha: Y\rightarrow H$. 
By Proposition 3.1 in \cite{IS} we have that 
$$\|\rho \|_{cb}\leq m\cdot\!\! \sum_{F\in \Omega_0(G)}\|\chi_F\|_{B(G)}
\leq m^2 \max\{\|\chi_F\|_{B(G)}: F \in \Omega_0(G)\}$$
which is a   contradiction.
\end{proof}

\noindent
{\bf Acknowledgment } Warm thanks are due to  the  referee for his useful comments and in particular for indicating a gap in the original proof of  Theorem \ref{222}.

\end{document}